\newtheorem{theorem}{Theorem}[section]
\newtheorem{corollary}[theorem]{Corollary}
\newtheorem{lemma}[theorem]{Lemma}
\newtheorem{proposition}[theorem]{Proposition}
\theoremstyle{definition}
\newtheorem{assumption}[theorem]{Assumption}
\newcommand{\bone}{\mathbf{1}}
\newcommand{\ba}{\mathbf{a}}
\newcommand{\bzeta}{\boldsymbol{\zeta}}
\newcommand{\bzer}{\boldsymbol{0}}
\newcommand{\bz}{{\mathbf{z}}}
\newcommand{\bZ}{{\boldsymbol{Z}}}
\newcommand{\bbeta}{\boldsymbol{\beta}}
\DeclareMathAlphabet{\mathscrbf}{OMS}{mdugm}{b}{n}
\numberwithin{equation}{section}
\newcommand{\bcC}{{\boldsymbol{\cC}}}
\newcommand{\dotms}{\dotsm}
\newcommand{\by}{{\bf y}}
\newcommand{\bx}{{\bf x}}
\newcommand{\br}{{\operatorname{br}}}
\newcommand{\bphi}{\boldsymbol{\varphi}}
\newcommand{\bfl}{\boldsymbol{f}}
\newcommand{\bs}{\boldsymbol{s}}
\newcommand{\bv}{{\boldsymbol{v}}}
\newcommand{\bV}{{\boldsymbol{V}}}
\newcommand{\bPsi}{\boldsymbol{\Psi}}
\newcommand{\bt}{{\boldsymbol{t}}}
\newcommand{\bDelta}{\boldsymbol{\Delta}}
\newcommand{\eps}{\varepsilon}
\newcommand{\R}{\mathbb{R}}
\newcommand{\Z}{\mathbb{Z}}
\newcommand{\E}{\mathbb{E}}
\newcommand{\cV}{\mathcal{V}}
\newcommand{\bX}{{\boldsymbol{X}}}
\newcommand{\bT}{{\bf T}}
\newcommand{\bmu}{\boldsymbol{\mu}}
\newcommand{\brho}{\boldsymbol{\rho}}
\newcommand{\weakarrow}{{\overset{(d)}{\Longrightarrow}}}
\newcommand{\bL}{{\boldsymbol{L}}}
\newcommand{\bR}{{\boldsymbol{R}}}
\newcommand{\PR}{\mathbb{P}}
\newcommand{\bdelta}{\boldsymbol{\delta}}
\newcommand{\bbX}{{\mathbb{X}}}
\newcommand{\G}{{\mathcal{G}}}
\newcommand{\SBM}{{\operatorname{SBM}}}
\newcommand{\cC}{{\mathcal{C}}}
\newcommand{\Exp}{\textup{Exp}}
\title{A central limit theorem for the giant in a stochastic block model}
\author{David Clancy, Jr.}
\date{\today}
\newcommand{\diag}{\operatorname{diag}}
\begin{document}

\begin{abstract}
We provide a simple proof for of the central limit theorem for the number of vertices in the giant for super-critical stochastic block model using the breadth-first walk of Konarovskyi, Limic and the author (2024). Our approach follows the recent work of Corujo, Limic and Lemaire (2024) and reduces to the classic central limit theorem for the Erd\H{o}s-R\'{e}nyi model obtained by Stepanov (1970). 
\end{abstract}

\maketitle

{\small \noindent\textbf{Keywords.} stochastic block model, central limit theorem, random graphs, Delta method.\\
\noindent\textbf{AMS MSC 2020.} Primary 60F05. Secondary 05C80.
}
\section{Introduction}

It is well-known that if $G(n,p)$ is the Erd\H{o}s-R\'{e}nyi random graph on $n$ vertices and $p = c/n$ then the largest connected component $\cC_n(1)$ satisfies
\begin{align}\label{eqn:LLNforER}
    n^{-1}\#\cC_n(1) \weakarrow \rho(\lambda)
\end{align}where $\rho(c)$ is the largest solution to $x\ge 0$ to $1-e^{-c x} = x$. See \cite{ER.60} or \cite[Chapter 4]{vanderHofstad.17}. In fact, a central limit theorem (CLT) is known for the Erd\H{o}s-R\'{e}nyi random graph. In \cite{Stepanov.70}, Stepanov proved
\begin{align}\label{eqn:cltforer}
    n^{1/2} \left(n^{-1}\#\cC_n(1) - \rho(c)\right) \weakarrow \mathcal{N}(0,\sigma^2(c))
\qquad\textup{where}\qquad
    \sigma^2(c) = \frac{\rho(c) (1-\rho(c))}{(1-c(1-\rho(c)))^2}.
\end{align}
See also \cite{BBF.00,BR.12,CLL.24,EFL.23,Pittel.90,Rath.18} for proofs.

We are interested in establishing an analogous result of the stochastic block model \cite{HLL.83} using a method similar to \cite{BR.12,CLL.24} More precisely, we let $\SBM_n(n_1,\dotsm,n_d, P)$ be the stochastic block model on $n = \sum_{j} n_j$ many vertices where the vertices are partitioned into $d$ classes $\cV_j$ of (respective) size $n_j$. The edges are included independently according to the rule that for all $u,v\in \cV_1\cup\dotsm\cup \cV_d$ 
\begin{equation*}
    \PR(u\sim v) = p_{i,j} = 1-\exp(-\kappa_{i,j}/n)\qquad\textup{ for all }u\in \cV_i, v\in \cV_j.
\end{equation*}
Label the connected components of $\SBM_n(n_1,\dotms, n_d,P)$ by $(\cC_n(l);l\ge 1)$ in decreasing order of cardinality. We write $\boldsymbol{\cC}_n(l)$ for the vector whose $k$th entry is $\#\cC_n(l)\cap \cV_k$. In \cite{BJR.07}, Bollob\'{a}s, Janson and Riordan obtained a weak law of large numbers for the random vector $\bcC_n(1)$ under very mild assumptions. In lieu of restating their assumptions, we state the assumptions that turn out to be sufficient for a central limit theorem:
\begin{assumption} \label{ass:1}
    The following hold:
    \begin{enumerate}
        \item[\textbf{(i)}] For all $j$, $n_j = \mu_j n + \beta_j n^{1/2} + o(n^{1/2})$ as $n\to\infty$ for some probability vector $\bmu = (\mu_1,\dotsm, \mu_d)\in (0,\infty)^d$ and some $\beta_j\in \R$. 
        \item[\textbf{(ii)}] For all $i,j$: $p_{i,j} = 1-\exp(-\kappa_{i,j}^{(n)}/n)$ where $K^{(n)} = (\kappa_{i,j}^{(n)};i,j\in[d])\in \R_+^{d\times d}$ is a symmetric $d\times d$ matrix and satisfies
        \begin{align*}
            \kappa_{i,j}^{(n)} = \kappa_{i,j}+ n^{-1/2}\lambda_{i,j} + o(n^{-1/2}). 
        \end{align*}
        \item[\textbf{(iii)}] The matrix $K = (\kappa_{i,j};i,j\in[d])$ is irreducible and the Perron-Frobenius eigenvalue of $KM$ where $M = \operatorname{diag}(\bmu)$ is $\lambda_1>1$.
    \end{enumerate}
\end{assumption}

These are natural conditions to place on $\SBM_n(n_1,\dotsm, n_d, P)$ by the results of \cite{BJR.07} Bollob\'{a}s, Janson and Riordan and the CLT in Theorem 2.2 of \cite{Puhalskii.05} for the Erd\H{o}s-R\'{e}nyi random graph. In fact, if we let $\brho = (\rho_i;i\in[d])$ be the unique strictly positive solution to        \begin{equation}\label{eqn:rhodef}
            1-\exp\left(-\sum_{j=1}^n \kappa_{i,j}\mu_j \rho_j\right) = \rho_i.
\end{equation} the the results of \cite{BJR.07} imply the following.
\begin{proposition}[Bollob\'{a}s, Janson and Riordan \cite{BJR.07}]\label{prop:BJR}
    Let $\cC_n(1)$ be largest connected component of $\SBM_n(n_1,\dotsm,n_d,P)$. Under Assumption \ref{ass:1},
    \begin{align*}
        n^{-1}\boldsymbol{\cC_n}(1) \weakarrow M\brho\in(0,\infty)^d
    \end{align*}
    where $\brho$ is as in \eqref{eqn:rhodef}. Moreover, $\max_{l\ge 2}\#\cC_n(l) = O_{\PR}(\log(n))$.
\end{proposition}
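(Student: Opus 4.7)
My plan is to adapt the standard multi-type branching process exploration argument to this finite-type setting, following the scheme of \cite{BJR.07}. The main ingredients are a coupling between the breadth-first search (BFS) exploration of the SBM and a multi-type Poisson Galton-Watson tree, a second moment plus sprinkling argument to identify a single giant component, and sub-exponential tail bounds on the dual subcritical process.

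First I would fix a starting vertex $v\in \cV_i$ and explore its component via BFS. Because $n_j = \mu_j n + O(n^{1/2})$ and $p_{i,j}\sim \kappa_{i,j}/n$, the number of type-$j$ neighbors of $v$ is close in total variation to a $\text{Poisson}(\kappa_{i,j}\mu_j)$ variable. Iterating, the first $o(n)$ generations of the BFS are coupled (up to $o(1)$ total variation error) with a multi-type Poisson Galton-Watson tree of mean matrix $KM$. Assumption \ref{ass:1}(iii) places this process in the supercritical regime, and a standard generating function argument identifies the survival probability from a type-$i$ root as the unique positive solution $\rho_i$ of \eqref{eqn:rhodef}.

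Next, let $Y_{n,i}$ denote the number of type-$i$ vertices whose component has size at least $(\log n)^2$. The branching process coupling gives $n^{-1}\E[Y_{n,i}]\to \mu_i\rho_i$; a second moment estimate, obtained by running two BFS explorations from distinct seeds and observing that they remain disjoint for $(\log n)^2$ steps with high probability, upgrades this to $n^{-1}Y_{n,i}\to \mu_i\rho_i$ in probability. A sprinkling argument then turns this into a statement about a single giant component: decompose the edge rates as $\kappa_{i,j} = (1-\eps)\kappa_{i,j}+\eps\kappa_{i,j}$ and form two independent graphs; the first produces a vertex set $S_n\subset[n]$ with the correct type composition, while the second, by irreducibility of $K$, connects $S_n$ into one component with probability tending to $1$. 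This yields $n^{-1}\bcC_n(1)\weakarrow M\brho$.

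For the second assertion, the dual branching process (the original multi-type Galton-Watson tree conditioned on extinction) is subcritical, so its total progeny has geometric tails. A union bound over the $O(n)$ vertices outside $S_n$ then yields $\max_{l\ge 2}\#\cC_n(l) = O_{\PR}(\log n)$. The main obstacle in this outline is the sprinkling step: one has to verify that the low-rate second graph uniformly connects all vertices of $S_n$ across all $d$ types, which uses a quantitative form of the irreducibility of $K$ (for instance, a uniform lower bound on a connecting path in the reduced $d$-vertex graph). This is where the argument genuinely goes beyond the Erd\H{o}s-R\'{e}nyi case treated in \cite{BR.12,CLL.24}.
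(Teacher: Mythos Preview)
The paper does not give its own proof of this proposition: it is stated with attribution to Bollob\'{a}s, Janson and Riordan \cite{BJR.07} and is used throughout as an input rather than something to be established. So there is no argument in the paper to compare against; the author simply imports the law of large numbers and the $O_{\PR}(\log n)$ bound on the non-giant components from \cite{BJR.07} (the former being their Theorem~3.1 specialized to finite type, the latter following from their subcritical estimates in the finite-type setting).

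Your sketch is a faithful outline of the BJR methodology and would serve as a self-contained proof in this finite-type situation. Two small remarks. First, ``the first $o(n)$ generations'' is not quite the right phrasing: the coupling is maintained until the exploration has touched $o(n)$ \emph{vertices} (or, more sharply here, some polylogarithmic number), not $o(n)$ generations. Second, for the $O_{\PR}(\log n)$ bound you are implicitly using that the kernel restricted to the complement of the giant is genuinely subcritical with spectral radius bounded away from $1$; in the paper's notation this is exactly the content of Lemma~\ref{lem:BJR_Lem2} (the Perron--Frobenius eigenvalue of $KM(I-\diag(\brho))$ is strictly less than $1$), which is what converts the dual branching process tails into a uniform geometric bound and makes the union bound over $n$ vertices go through. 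With those two clarifications your outline is correct.
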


We will prove the following.
\begin{theorem}\label{thm:CLT}
    Under Assumption \ref{ass:1},
    \begin{align*}
        n^{1/2} &\left(n^{-1}K\bcC_n(1) - KM\brho\right) \weakarrow  J^{-1} K \bzeta - J^{-1} (KB+\Lambda M)\brho - \Lambda M \brho
    \end{align*}
    where $ J =  KM(I-\diag(\brho)) -I$, $B = \operatorname{diag}(\bbeta)$ and $\bzeta = (\zeta_1,\dotms,\zeta_d)^T$ are independent centered normal random variables with $\E[\zeta_j^2] = \mu_j \rho_j(1-\rho_j)$.
\end{theorem}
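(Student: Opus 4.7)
The plan is to adapt the strategy of \cite{CLL.24}---which proves Stepanov's Erd\H{o}s-R\'{e}nyi CLT via a breadth-first walk and the delta method---to the stochastic block model by running the analogous argument on the multi-type breadth-first walk of Konarovskyi, Limic and the author. The proof proceeds in three conceptual steps: (a) represent $\bcC_n(1)$ as a hitting-time functional of a $d$-dimensional Poisson-driven process $\bS_n$; (b) prove a functional CLT for $\bS_n$; and (c) transfer the Gaussian fluctuations through the functional by the delta method. Proposition~\ref{prop:BJR} and Assumption~\ref{ass:1}(iii) will respectively deliver the continuity of the hitting functional and the invertibility of the Jacobian $-J$.

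\textit{Step 1: walk and hitting-time representation.} The walk $\bS_n$ is a $d$-dimensional c\`{a}dl\`{a}g process whose $i$th coordinate tracks the running imbalance between class-$i$ vertices queued and those processed during a BFS exploration. Under the Poisson kernel $p_{i,j} = 1 - \exp(-\kappa_{i,j}^{(n)}/n)$, the jumps of the $i$th coordinate split into $d$ independent Poisson-thinned components, one per source class; this independence across block pairs is the mechanism by which the problem reduces to Stepanov's single-class CLT. The number of class-$j$ vertices visited during the longest excursion of $\bS_n$ equals $|\cC_n(1)\cap\cV_j|$ up to an $O_{\PR}(\log n)$ error, controlled by the second statement of Proposition~\ref{prop:BJR}, and the fluid-limit path hits its target level at the parameter $M\brho$ with $\brho$ solving \eqref{eqn:rhodef}.

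\textit{Step 2: functional CLT and delta method.} The functional CLT for $\bS_n$ yields a centered Gaussian fluctuation process whose value at the hitting time decomposes according to the three perturbations in Assumption~\ref{ass:1}: the $n^{1/2}$-order class-size corrections $\bbeta$ give the $B$-dependent deterministic shift, the $n^{-1/2}$-order kernel corrections $\Lambda$ give the $\Lambda$-dependent shift, and the intrinsic Poisson fluctuations of the edges split by independence into $d$ single-class contributions, each essentially Stepanov's CLT for a single Erd\H{o}s-R\'{e}nyi graph and producing the vector $\bzeta$ with $\E[\zeta_j^2] = \mu_j \rho_j(1-\rho_j)$. Applying the delta method to $\brho = \bone - e^{-KM\brho}$, the Jacobian of $\bphi \mapsto \bphi - KM(\bone - e^{-\bphi})$ at $\bphi = KM\brho$ equals $I - KM\diag(\bone-\brho) = -J$, which is invertible by supercriticality. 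Inversion yields $n^{1/2}(K^{(n)}\bcC_n(1)/n - KM\brho) \weakarrow J^{-1} K\bzeta - J^{-1}(KB+\Lambda M)\brho$, and the residual $-\Lambda M\brho$ in the stated limit comes from the identity $K\bcC_n(1)/n = K^{(n)}\bcC_n(1)/n - n^{-1/2}\Lambda\bcC_n(1)/n + o(n^{-1/2})$ together with $\bcC_n(1)/n \to M\brho$.

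\textit{Main obstacle.} The nontrivial technical step is continuity of the multidimensional hitting-time functional at the fluid limit in the Skorokhod path topology. This requires a transversal crossing at the giant scale---precisely $\det(-J)\neq 0$, guaranteed by $\lambda_1>1$---together with control of the subleading excursions to rule out near-ties among the largest components. The former is Assumption~\ref{ass:1}(iii) and the latter is the $O_{\PR}(\log n)$ bound in Proposition~\ref{prop:BJR}. The remaining work is careful bookkeeping to identify the three noise contributions in their stated form.
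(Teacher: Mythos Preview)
Your proposal follows essentially the same architecture as the paper: encode $\bcC_n(1)$ via the multi-type breadth-first walk of \cite{CKL.22}, prove a Donsker-type functional CLT for the walk (the paper's Lemma~\ref{lem:Donsker}), compute the Jacobian of the fluid limit $\bphi(\bt)=-\bt+KM(\bone-e^{-\bt})$ at $\bt^0=KM\brho$ to get $J$, and finish by the delta method together with the correction $K^{(n)}=K+n^{-1/2}\Lambda+o(n^{-1/2})$ that produces the extra $-\Lambda M\brho$.

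The one place where the paper's route differs from what you sketch is your ``main obstacle.'' You frame it as establishing continuity of the multidimensional first-hitting-time functional in the Skorohod topology. The paper does \emph{not} do this; the Chaumont--Marolleau first-passage map $y\mapsto\bT(y,\bv,\cdot)$ is defined through a partial order on $\R_+^d$, and its abstract continuity is not obviously available. Instead the paper exploits the explicit exponential-clock description of the jump times of $\bT(\cdot,\bv,\Z^{(n)})$ (Corollary~\ref{cor:bTbhave}): conditionally on the component sizes, the giant excursion begins at an $\Exp(\ba^T\bcC_n(1))$ time and is preceded only by components whose own exponential clocks ring earlier. An elementary first-moment bound (Lemma~\ref{lem:help11}) then shows directly that the left endpoint $\bL_n$ of the giant excursion satisfies $n^{1/2}\bL_n\to\bzer$ and $n^{1/2}\bZ^{(n)}(\bL_n-)\to\bzer$; from this $\bR_n\to\bt^0$ and $n^{1/2}\bZ^{(n)}(\bR_n)\to\bzer$ follow immediately. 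One then expands $\sqrt{n}\,\bphi(\bR_n)$ using $\bZ^{(n)}=\bphi+n^{-1/2}\bPsi+o(n^{-1/2})$ and inverts by the Jacobian at $\bt^0$. This sidesteps any Skorohod-continuity statement for the hitting functional and is the cleaner way to close the argument.
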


\subsection{Discussion}

\subsubsection{Gaussian fluctuations for the giant}

Neal \cite{Neal.06} studies an epidemic model on a population partitioned into classes. He shows that under fairly general conditions that if the proportion of individuals infected is asymptotically positive, then the vector of infected individuals of each type is asymptotically normal. One can reformulate his result into random graphs (see Section 7 therein or \cite{Neal.03}) to recover Theorem \ref{thm:CLT} whenever $\lambda_{i,j} = \beta_j = 0$.

In \cite{Rath.18}, R\'{a}th provides an explicit formula for the generating function for the size of the component containing the vertex $1$ in the Erd\H{o}s-R\'{e}nyi random graph and uses that to obtain a new proof of \eqref{eqn:cltforer}. In \cite[Remark 2.2(ii)]{Rath.18}, he describes how to extend the generating function for the Erd\"{o}s-R\'{e}nyi random graph to the stochastic block model. In principle, this representation could be used to obtain a proof of the CLT in Theorem \ref{thm:CLT} (perhaps under the additional assumption that $\lambda_{i,j} = \beta_j = 0$); however, the analysis seems difficult.

Recently, in \cite{BBS.24} Bhamdi, Budhiraja and Sakanaveeti established a functional CLT for the size (i.e. $\|\bcC_n(1)\|_1$) of the giant for stochastic block models of finite type. Therein, they show that the limiting fluctuations are Gaussian involving an infinite collection of SDEs under the same assumptions as Theorem \ref{thm:CLT}. They do not give a concise formula for the covariance at a single time except in the Erd\H{o}s-R\'{e}nyi case. See Remark 3 therein.  

\subsubsection{Recovering \eqref{eqn:cltforer}}

Let us consider the $d = 1$ case. In this case $n_1 = n, \beta = 0$. If $\kappa^{(n)} = c + n^{-1/2}(\lambda+o(1))$ where $c>1$, then $J = c - 1 - c\rho = -(1-c(1-\rho))$ where we write $\rho = \rho(c)$. It follows that
\begin{align*}
        &-K^{-1} J^{-1} K\zeta = \frac{1}{(1-c(1-\rho))^2} \zeta \sim \mathcal{N}(0,\sigma^2(c))\\
        &- KJ^{-1} (KB+\Lambda M)\rho - K^{-1}\Lambda M\rho = \frac{\lambda\rho}{c(1-c(1-\rho))} - \frac{\lambda\rho}{c}= \frac{\lambda \rho(1-\rho)}{1-c(1-\rho)}
    \end{align*} Hence, Theorem \ref{thm:CLT} reduces to the result of Stepanov \cite{Stepanov.70} mentioned above when $d = 1$ and $\lambda = 0$. When $\lambda \neq 0$, this is the same limit as obtained in Theorem 2.2(b) of \cite{Puhalskii.05}.

\section{Size of the giant}\label{sec:BJR}

In \cite{BJR.07}, Bollob\'as, Janson and Riordan lay out a general theory for identifying the existence, uniqueness and asymptotic size of a giant connected component. We will recall two operators on functions (i.e. column vectors) $f:[d]\to\R$ defined therein. When writing a function $f$ as a column vector we will write $\bfl$. The two operators are
\begin{align}\label{eqn:TKdef}
    T_Kf(i) = \sum_{j=1}^d \kappa_{i,j} f(j) \mu_j = (KM \bfl)_i
\qquad\textup{and}\qquad 
    \Phi_Kf(i) = 1-\exp(-T_Kf(i)).
\end{align}

\begin{lemma}[Lemmas 5.8, 5.10 in \cite{BJR.07}]\label{lem:5.10BJR} Suppose that $KM\in \R_+^{d\times d}$ is irreducible, then there are at most two functions $f$ such that $\Phi_Kf = f$. Moreover, the solutions are either identically 0 or strictly positive everywhere.
\end{lemma}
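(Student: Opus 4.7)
The plan is to carry out the proof inside the natural cone $[0,\infty)^d$ on which $\Phi_K$ acts. Since $\Phi_K f(i) = 1-e^{-(KMf)_i} \in [0,1)$ whenever $f \geq \bzer$, the operator $\Phi_K$ maps $[0,\infty)^d$ into $[0,1)^d$, so it suffices to classify the fixed points of $\Phi_K$ lying in this cone. The argument splits into a dichotomy (any fixed point is either $\bzer$ or strictly positive) and a uniqueness statement for the strictly positive fixed point.

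For the dichotomy, I would suppose $f \geq \bzer$ satisfies $\Phi_K f = f$ and $f(i_0) = 0$ for some $i_0$. Then $(KMf)_{i_0} = 0$, i.e.\ $\sum_{j} \kappa_{i_0,j}\mu_j f(j) = 0$; since all summands are non-negative and $\mu_j > 0$, this forces $f(j) = 0$ for every $j$ with $\kappa_{i_0,j} > 0$. Irreducibility of $KM$ (equivalently, of $K$, since $M = \diag(\bmu)$ has strictly positive diagonal) means the support digraph of $K$ is strongly connected, so iterating this propagation along directed paths out of $i_0$ forces $f \equiv \bzer$.

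For uniqueness, the key property is strict sub-homogeneity of $\Phi_K$ on the strictly positive cone. The function $y \mapsto 1 - e^{-y}$ is strictly concave on $[0,\infty)$ and vanishes at the origin, so for every $t \in (0,1)$ and $x > 0$ one has $1 - e^{-tx} > t(1 - e^{-x})$. Irreducibility guarantees that $(KMf)_i > 0$ for every $i$ whenever $f > \bzer$, so $\Phi_K(tf) > t \Phi_K f$ coordinatewise for any strictly positive $f$ and any $t \in (0,1)$. Now, given two strictly positive fixed points $f, g$, set $t^{\ast} := \min_i g(i)/f(i)$, attained at some index $i_0$. Then $t^{\ast} f \leq g$ with equality in the $i_0$th coordinate; if $t^{\ast} < 1$, monotonicity of $\Phi_K$ combined with strict sub-homogeneity would give
\begin{equation*}
    g = \Phi_K g \geq \Phi_K(t^{\ast} f) > t^{\ast} \Phi_K f = t^{\ast} f
\end{equation*}
coordinatewise, contradicting the equality at $i_0$. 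Hence $t^{\ast} \geq 1$, i.e.\ $g \geq f$; swapping the roles of $f$ and $g$ yields $f = g$.

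The entire argument is a standard treatment of a monotone, strictly sub-homogeneous operator on an irreducible cone (in the spirit of Krein--Rutman), so I anticipate no serious obstacle; the only bookkeeping step is verifying that the support digraph of $K$ is strongly connected, which is immediate from irreducibility of $KM$ together with positivity of the entries of $\bmu$.
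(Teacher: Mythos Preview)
Your argument is correct. The paper does not give its own proof of this lemma---it simply invokes Lemmas~5.8 and~5.10 of \cite{BJR.07}---and your monotone, strictly sub-homogeneous operator argument on the cone $[0,\infty)^d$ is precisely the finite-dimensional specialisation of what Bollob\'as, Janson and Riordan do there.
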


One of the solutions is $0$ and we denote by $\rho:[d]\to\R_+^d$ (or $\brho$ in vector form) the unique largest solution to $\Phi_Kf = f$. Lemma 5.15 in \cite{BJR.07} gives precise conditions under which $\rho>0$ whenever we have a finite type case and Theorem 6.17 therein provides needed information on the matrix $KM(I-\diag(\brho))$ whenver $\brho$ is non-trivial. In combination with the Perron-Frobenius theorem (see, e.g. \cite{Woess.09}, Theorem 3.35)  one can easily show the following:
\begin{lemma} \label{lem:BJR_Lem2}Suppose that $KM$ is irreducible. The function $\rho:[d]\to(0,\infty)^d$ if and only if $KM$ has Perron-Frobenius eigenvalue $\lambda_1>1$. Moreover, the Perron-Frobenius eigenvalue $\lambda_1^*$ of $KM(1-\diag(\brho))$ is strictly smaller than 1, i.e. $\lambda_1^*<1$.
\end{lemma}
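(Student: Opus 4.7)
The plan is to handle the two claims separately using Perron--Frobenius theory. Throughout, recall from Lemma \ref{lem:5.10BJR} that any fixed point of $\Phi_K$ is either identically zero or strictly positive, and that the non-negative irreducible matrix $KM$ has a simple positive Perron--Frobenius eigenvalue $\lambda_1$ with strictly positive right and left eigenvectors $\bv$ and $\bu$.

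For the first claim, I would prove the ``if'' direction by monotone iteration. The Jacobian of $\Phi_K$ at $\bzer$ is exactly $T_K = KM$, and Taylor expanding gives
\begin{equation*}
    \Phi_K(\epsilon \bv)_i \;=\; 1 - \exp(-\epsilon \lambda_1 v_i) \;=\; \epsilon \lambda_1 v_i + O(\epsilon^2)
\end{equation*}
as $\epsilon \downarrow 0$. Since $\lambda_1 > 1$ and $v_i > 0$ for all $i$, this yields an $\epsilon > 0$ small enough that $\Phi_K(\epsilon \bv) \geq \epsilon \bv$ coordinate-wise. Coordinate-wise monotonicity of $\Phi_K$ (from $x \mapsto 1 - e^{-x}$ being non-decreasing and $KM$ having non-negative entries) then shows that the iterates $\Phi_K^n(\epsilon \bv)$ are non-decreasing and bounded above by $\bone$, so they converge to a strictly positive fixed point, which must be $\brho$ by Lemma \ref{lem:5.10BJR}. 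Conversely, if $\brho > \bzer$, the strict inequality $1 - e^{-x} < x$ for $x > 0$ applied coordinate-wise gives $\brho < KM \brho$; pairing with $\bu$ yields $\bu^T \brho < \lambda_1\, \bu^T \brho$, forcing $\lambda_1 > 1$.

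For the second claim I would exploit the strict concavity of $x \mapsto 1 - e^{-x}$. Set $D := \diag(\bone - \brho)$, so that $KM(I - \diag(\brho)) = KMD$ is a non-negative, irreducible matrix (each $\rho_i < 1$, and $D$ is an invertible positive diagonal), with a well-defined simple Perron--Frobenius eigenvalue $\lambda_1^*$. Since $KMD$ and $DKM$ are similar matrices (conjugate via $D$), they share this eigenvalue. The tangent-line inequality $1 - e^{-x} > x e^{-x}$ for $x > 0$, applied with $x = (KM\brho)_i$, gives
\begin{equation*}
    \rho_i \;=\; 1 - e^{-(KM\brho)_i} \;>\; (1-\rho_i)(KM\brho)_i \;=\; (DKM\brho)_i
\end{equation*}
for every $i$, i.e.\ $DKM\brho < \brho$ strictly coordinate-wise. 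The Collatz--Wielandt characterization applied with the positive test vector $\brho$ then gives
\begin{equation*}
    \lambda_1^* \;\leq\; \max_i \frac{(DKM\brho)_i}{\rho_i} \;<\; 1,
\end{equation*}
as required.

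I do not foresee any substantial obstacle; the two minor subtleties are verifying that the $O(\epsilon^2)$ remainder in the monotone iteration can indeed be absorbed by $(\lambda_1 - 1)\epsilon\bv$ for $\epsilon$ small enough (straightforward since $\bv > \bzer$ has finitely many coordinates), and switching from $KMD$ to the similar matrix $DKM$ so that the Collatz--Wielandt inequality can be tested against the natural vector $\brho$ on the right.
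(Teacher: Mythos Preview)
Your argument is correct. Both directions of the first claim are clean: the monotone iteration from $\epsilon\bv$ works because $\Phi_K$ is coordinate-wise non-decreasing and bounded by $\bone$, and the converse via $1-e^{-x}<x$ paired against the left Perron eigenvector is standard. For the second claim the tangent-line inequality $1-e^{-x}>xe^{-x}$ together with the similarity $DKM=D(KMD)D^{-1}$ and Collatz--Wielandt is a neat way to get $\lambda_1^*<1$ directly.

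The paper takes a different route: it does not spell out a proof at all but simply invokes Lemma 5.15 and Theorem 6.17 of Bollob\'as--Janson--Riordan \cite{BJR.07} (which in the finite-type case characterize when $\brho>0$ and control the spectral radius of the ``dual'' operator $T_{\hat K}$, i.e.\ of $KM(I-\diag(\brho))$) together with the Perron--Frobenius theorem. Your approach is self-contained and elementary, making the convexity of $x\mapsto 1-e^{-x}$ do all the work; the paper's approach is shorter on the page but imports the corresponding infinite-type machinery from \cite{BJR.07}. Either is fine for the purposes of this lemma, and your direct argument has the advantage of not requiring the reader to chase references.
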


The second statement above implies that the matrix $J$ from Theorem \ref{thm:CLT} is invertible. 

\section{Fluctuations of the giant}\label{sec:fluctations}

\subsection{Preliminaries}

We will recall the construction of \cite{CKL.22} for encoding of the degree corrected stocahstic block model. To simplify notation, in this subsection we will write $\kappa_{i,j}$ instead of $\kappa_{i,j}^{(n)}$ and we will assume that $\kappa_{i,j}^{(n)}>0$. This can be done without loss of generality by considering a perturbation sufficiently small and using the general equivalence of Janson \cite{Janson.10a}.

In \cite{CKL.22}, the authors provided an encoding of the degree-corrected stochastic block model, which is an inhomogeneous version of the graph $\SBM_n(n_1,\dotsm, n_d,P)$. The degree-corrected stochastic block model consists of $n = \sum_{j=1}^d n_j$ many vertices  partitioned into $d$ classes $\cV_1\cup\dotsm\cup\cV_d$ where each vertex $u$ has a weight $w_u>0$ and edges are included independently with probability:
\begin{equation*}
\PR(u\sim v) = 1-\exp(-q_{i,j} w_u w_v) \qquad\textup{for all }\quad u\in \cV_i, v\in \cV_j.
\end{equation*}
Here $Q = (q_{i,j})$ is a $d\times d$ symmetric matrix with strictly positive diagonal. In the present work, we deal with the simple case where all the weights $w_{u} \equiv n^{-2/3}$ and $Q = (q_{i,j} = \kappa_{i,j}n^{1/3})$. This corresponds to the edge probabilities $P$ under Assumption \ref{ass:1}. Define
\begin{equation*}
    N_j^{(n)}(t) = \sum_{l=1}^{n_j} n^{-2/3} 1_{[\xi_{l,j}^\circ\le n^{1/3} \kappa_{j,j}t]} \qquad\textup{ where}\qquad \xi_{l,j}^\circ \sim \Exp(n^{-2/3})
\end{equation*}
and set
\begin{equation*}
    X_{i,j}^{(n)}(t_j) = \kappa_{i,i}^{-1} \kappa_{i,j} N_j^{(n)}(t_j) - 1_{[i=j]} t_j.
\end{equation*} The processes $X_{i,j}^{(n)}$ are precisely those studied in \cite{CKL.22}. For $\bt\in \R_+^d$, we write $\bbX^{(n)}(\bt) = (X_{i,j}^{(n)}(t_j))$ as the matrix valued function $\bbX^{(n)}:\R_+^d\to \R^{d\times d}$. We write $\bbX^{(n)}(\bt-) = (X_{i,j}^{(n)}(t_j-))$ as the $d\times d$ matrix. Note that for the vector $\bone\in \R^d$ of all $1$s we have
\begin{align*}
    \left(\bbX^{(n)}(\bt)\bone \right)_i= \sum_{j=1}^d X_{i,j}^{(n)}(t_j).
\end{align*}

Fix a vector $\bv\in (0,\infty)^d$ and let
\begin{equation*}
   \bT(y,\bv ,\bbX^{(n)}) =  \inf\{\bt: \bbX^{(n)}(\bt-) \bone = -\bv y\}.
\end{equation*} That is, $\bT = \bT(y,\bv,\bbX^{(n)})$ is a solution to 
\begin{align*}
    \bbX^{(n)}(\bt-)\bone = -\bv y
\end{align*}and any other solution $\bt'$ satisfies $\bT\le \bt'$ coordinate-wise. These are well-defined by the construction of \cite{Chaumont:2020} and are a.s. finite since $N_j^{(n)}$ are bounded for each fixed $n$. A key result in \cite{CKL.22} is that
\begin{equation}\label{eqn:hittingTimesX}
    \bT(y,\bv,\bbX^{(n)}) = \bv y + \sum_{l: E_{l}<y} \bDelta^{\bbX^{(n)}}{(l)}
\end{equation}
where $E_{l}$ are the random jump times of $y\mapsto \bT(y,\bv,\bbX^{(n)})$ arranged in some order. Moreover, one can label $\bDelta^{\bbX^{(n)}}(l)$ in a random way so that
\begin{equation}\label{eqn:Xhittingtimejumps1}
    (\bDelta^{\bbX^{(n)}}(l);l\ge 1) \overset{d}{=}\left( n^{-2/3}\operatorname{diag}(K)^{-1} K \boldsymbol{\cC}_n(l)\right).
\end{equation}

More precisely, we have the following theorem.
\begin{theorem}\label{thm:CKL1} Fix a vector $\bv\in (0,\infty)^d$. 
Let $\cC_n(l)$ be the connected components of $\G_n\sim \SBM(n_1,\dotsm,n_d,P)$ where $p_{i,j} = 1-e^{-\kappa_{i,j}/n}$ and $\kappa_{i,i}>0$ for all $i$. Conditionally given the connected components $\cC_n(l)$, generate independent exponential random variables $E_l\sim \Exp(n^{-1/3}\bv^T \operatorname{diag}(K)\bcC_n(l))$. Then
\begin{align*}
    \left(\bT(y,\bv,\bbX^{(n)}) ;y\ge 0\right)\overset{d}{=} \left(\bv y + \sum_{l: E_l<y} n^{-2/3} \operatorname{diag}(K)^{-1} K \bcC_n(l);y\ge 0\right).
\end{align*}
\end{theorem}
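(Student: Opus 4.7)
The plan is to identify $\bT(\cdot,\bv,\bbX^{(n)})$ with the time-changed breadth-first exploration of $\SBM_n(n_1,\dotsm,n_d,P)$ driven by the exponentials $\xi_{l,j}^{\circ}$, following \cite{CKL.22}, and then read off the jump sizes and the inter-jump distributions from that coupling.

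First, I would recall the encoding. Under the scaling $w_u=n^{-2/3}$ and $q_{i,j}=n^{1/3}\kappa_{i,j}$, each $\xi_{l,j}^{\circ}\sim\Exp(n^{-2/3})$ is attached to one type-$j$ vertex and causes $N_j^{(n)}$ to jump by $n^{-2/3}$ at clock time $t_j=\xi_{l,j}^{\circ}/(n^{1/3}\kappa_{j,j})$, which is an exponential time of rate $n^{-1/3}\kappa_{j,j}$. Row $i$ of $\bbX^{(n)}(\bt)\bone$ then book-keeps the imbalance between type-$i$ vertices added to the exploration stack (the $-t_i$ term) and the ``credit'' of type-$j$ neighbors uncovered so far (the $\kappa_{i,i}^{-1}\kappa_{i,j}N_j^{(n)}(t_j)$ sum). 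Under the time change $\bt=\bv y$, each vertex in a currently unexplored component is consumed at an exponential time of rate $n^{-1/3}\kappa_{j,j}v_j$ if it is of type $j$, and when $\bt$ increases just enough to exhaust a single component $\cC_n(l)$ the increment of $\bbX^{(n)}(\bt)\bone$ is precisely $n^{-2/3}\operatorname{diag}(K)^{-1}K\bcC_n(l)$, since each type-$j$ vertex in the component contributes $n^{-2/3}$ to $N_j^{(n)}$.

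Next I would analyse $y\mapsto\bT(y,\bv,\bbX^{(n)})$. Between discoveries $\bbX^{(n)}(\bt-)\bone$ moves deterministically with drift $-\bone$, so the smallest $\bt$ reaching level $-\bv y$ advances by $\bv$ per unit $y$, yielding the $\bv y$ term of \eqref{eqn:hittingTimesX}; at each discovery the hitting vector jumps by the component contribution identified above, confirming the jump-size identity \eqref{eqn:Xhittingtimejumps1}. Finally, the waiting time between consecutive discoveries is, conditional on the realised components, the minimum over the unexplored vertices of exponentials whose rates sum over $\cC_n(l)$ to $n^{-1/3}\bv^T\operatorname{diag}(K)\bcC_n(l)$, matching the theorem. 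The main obstacle will be the bookkeeping of the random permutation between the size-ordered labels of $\bcC_n$ and the discovery-ordered labels of $\bDelta^{\bbX^{(n)}}$; because only equality in distribution is claimed, the memoryless property of the driving exponentials absorbs this reindexing once one conditions on the graph and invokes the Palm-calculus version of the thinning of competing Poisson clocks.
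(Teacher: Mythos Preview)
The paper does not actually prove Theorem~\ref{thm:CKL1}: it is stated as a key input imported from \cite{CKL.22} (see the sentence ``A key result in \cite{CKL.22} is that\ldots'' and the citation preceding the theorem). So there is no proof in the paper to compare your proposal against; your sketch is essentially an outline of the argument in \cite{CKL.22}, which is the natural and correct reference.

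That said, two of your heuristic identifications are imprecise and would need to be cleaned up if you wanted a self-contained argument. First, the sentence ``when $\bt$ increases just enough to exhaust a single component $\cC_n(l)$ the increment of $\bbX^{(n)}(\bt)\bone$ is precisely $n^{-2/3}\operatorname{diag}(K)^{-1}K\bcC_n(l)$'' is not literally true: over a full excursion the increment of $\bbX^{(n)}(\bt)\bone$ is zero (the process returns to the level $-\bv y$). What equals $n^{-2/3}\operatorname{diag}(K)^{-1}K\bcC_n(l)$ is the increment of $\bT$, i.e.\ the $\bt$-time needed for the unit negative drift to absorb the total positive jump contributed by the vertices of $\cC_n(l)$. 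Second, writing ``under the time change $\bt=\bv y$'' is misleading: that relation holds only between excursions, and once jumps have occurred one has $\bT(y)=\bv y+\text{(accumulated jumps)}$, so the $t_j$-clocks of undiscovered vertices have been advanced by more than $v_j y$.

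The genuinely delicate step, which your last paragraph gestures at, is showing that conditionally on the graph the discovery times $E_l$ in the $y$-scale are \emph{independent} exponentials with the stated rates, not merely that the inter-arrival times are exponential. The point is that during the traversal of a component the $t_j$-coordinates advance by exactly the amount needed to ring all and only the vertices of that component, so by the memoryless property the residual $t_j$-clocks of every vertex in every other component are again fresh $\Exp(n^{-1/3}\kappa_{j,j})$ variables at the end of the excursion. Translating back to the $y$-scale via $dt_j/dy=v_j$ between excursions then gives independent component clocks with rate $\sum_j n^{-1/3}\kappa_{j,j}v_j(\bcC_n(l))_j=n^{-1/3}\bv^T\operatorname{diag}(K)\bcC_n(l)$. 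Your invocation of ``Palm calculus'' and ``thinning of competing Poisson clocks'' is pointing at this, but the actual work is the excursion-by-excursion bookkeeping carried out in \cite{CKL.22} (building on \cite{Chaumont:2020}); you should either reproduce that argument or, as the paper does, simply cite it.
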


\subsection{A change of variable}

It is actually easier to deal with a slightly modified process $\Z^{(n)}(\bt) = (Z_{i,j}^{(n)}(t_j))$ defined by
\begin{equation*}
Z_{i,j}^{(n)}(t) = n^{-1/3}\kappa_{i,i} X_{i,j}^{(n)}( n^{1/3} \kappa_{j,j}^{-1} t_j)
\end{equation*} which, as a matrix equation, becomes
\begin{equation*}
    \Z^{(n)}(\bt) = n^{-1/3} \operatorname{diag}(K) \bbX^{(n)}\left(n^{1/3}\operatorname{diag}(K)^{-1}\bt\right).
\end{equation*}
Observe that for any $\bv\in (0,\infty)^d$
\begin{align*}
    \bT(y,\bv, \Z^{(n)}) &= \inf\{\bt\in \R_+^d: n^{-1/3} \operatorname{diag}(K) \bbX^{(n)}(n^{1/3} \operatorname{diag}(K)^{-1}\bt) = -y\bv\}\\
    &= \inf\{n^{-1/3}\diag(K)\bs\in \R_+^d: \bbX^{(n)}(\bs) = - n^{1/3}y\operatorname{diag}(K)^{-1} \bv\}
    \\& = n^{-1/3} \operatorname{diag}(K)\bT\left(n^{1/3} y,\operatorname{diag}(K)^{-1}\bv, \bbX^{(n)} \right).
\end{align*} The second inequality follows from $\bt = n^{-1/3} \operatorname{diag}(K)\bs$ and the last equality follows from the left-continuity of $y\mapsto\bT(y,\bv,\Z^{(n)})$. 
Therefore, using \eqref{eqn:hittingTimesX} and \eqref{eqn:Xhittingtimejumps1}, the jumps of $y\mapsto \bT(y,\bv, \Z^{(n)})$ can be represented as
\begin{align*}
    \bT(y,\bv, \Z^{(n)}) = \bv y + \sum_{l: E'_l < y} \bDelta_n(l)\qquad\textup{ where}\qquad \bDelta_n(l) = n^{-1} K \boldsymbol{\cC}_n(l).
\end{align*}
A corollary of Theorem \ref{thm:CKL1} and standard properties of exponential random variables is the following.
\begin{corollary}\label{cor:bTbhave}
    Let $\bv\in (0,\infty)^d$ be fixed and let $\cC_n(l)$ be the connected components of $\SBM_n(n_1,\dotms,n_d,P)$ where $p_{i,j} = 1-e^{-\kappa_{i,j}/n}$ for all $i,j$ and $\kappa_{i,i}>0$ for all $i$. Conditionally given $\cC_n(l)$, let $E_l'\sim \Exp(\bv^T \bcC_n(l))$. Then
    \begin{equation*}
        \left(\bT(y,\bv,\Z^{(n)});y\ge 0\right) \overset{d}{=}\left(\bv y + \sum_{l:E_l'<y} n^{-1} K \bcC_n(l);y\ge 0\right).
    \end{equation*}
\end{corollary}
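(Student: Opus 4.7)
The plan is to deduce this corollary directly from Theorem \ref{thm:CKL1} together with the scaling identity
\begin{equation*}
\bT(y,\bv,\Z^{(n)}) = n^{-1/3}\diag(K)\,\bT\!\left(n^{1/3}y,\,\diag(K)^{-1}\bv,\,\bbX^{(n)}\right)
\end{equation*}
that was established just before the statement. The only non-bookkeeping ingredient is the elementary scaling property of exponentials: if $E\sim\Exp(a)$ and $c>0$, then $cE\sim\Exp(a/c)$.

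First I would substitute $\bv\mapsto \diag(K)^{-1}\bv$ and $y\mapsto n^{1/3}y$ into the representation of Theorem \ref{thm:CKL1}, obtaining
\begin{equation*}
\bT\!\left(n^{1/3}y,\diag(K)^{-1}\bv,\bbX^{(n)}\right) = \diag(K)^{-1}\bv\cdot n^{1/3}y + \sum_{l:\tilde E_l<n^{1/3}y} n^{-2/3}\diag(K)^{-1}K\bcC_n(l),
\end{equation*}
where, conditionally on the components, $\tilde E_l\sim\Exp\!\left(n^{-1/3}(\diag(K)^{-1}\bv)^T\diag(K)\bcC_n(l)\right)=\Exp(n^{-1/3}\bv^T\bcC_n(l))$.

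Next I would multiply both sides by $n^{-1/3}\diag(K)$. The deterministic linear term becomes $\bv y$, the jump contributions collapse to $n^{-1}K\bcC_n(l)$, and the left-hand side becomes $\bT(y,\bv,\Z^{(n)})$ by the scaling identity. Finally I would rewrite the indicator $\{\tilde E_l<n^{1/3}y\}$ as $\{E_l'<y\}$ with $E_l':=n^{-1/3}\tilde E_l$; by the exponential scaling property the conditional law of $E_l'$ is precisely $\Exp(\bv^T\bcC_n(l))$, which matches the statement. Equality holds jointly in $y\ge 0$ because the substitution is a deterministic time change and the sum is already indexed over all jumps.

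There is no genuine obstacle here — the whole proof is a change of variables applied to Theorem \ref{thm:CKL1}. The only thing to be careful about is verifying the two cancellations $(\diag(K)^{-1}\bv)^T\diag(K)=\bv^T$ and $n^{-1/3}\diag(K)\cdot n^{-2/3}\diag(K)^{-1}K=n^{-1}K$, so that the rate of $E_l'$ and the size of the jumps come out exactly as claimed, with no residual powers of $n$ or factors of $\diag(K)$.
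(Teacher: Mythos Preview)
Your proof is correct and is exactly the argument the paper has in mind: it states the corollary as an immediate consequence of Theorem~\ref{thm:CKL1}, the scaling identity for $\bT$ derived just above, and ``standard properties of exponential random variables,'' which is precisely the change of variables and the $\Exp$ scaling you carry out. The two cancellations you flag are the only points requiring care, and you have verified them correctly.
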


\subsection{Path behavior of $Z_{i,j}^{(n)}$}

We now write out more explicitly the dependence of the entries $\kappa_{i,i}$ on $n$. Observe that
\begin{align}\label{eqn:zform1}
    Z_{i,j}^{(n)}(t_j) = - 1_{[i=j]} t_j+\kappa_{i,j}^{(n)}\sum_{j=1}^{n_j} n^{-1} 1_{[\xi_{l,j}\le t]}  \quad\textup{where}\quad \xi_{l,j} = n^{-2/3} \xi_{l,j}^\circ \overset{d}{=}\Exp(1).
\end{align}
The summation in \eqref{eqn:zform1} is essentially the empirical distribution function of $n_j$ many i.i.d. $\Exp(1)$ random variables. Donsker's theorem therefore implies for all $j$
\begin{align}\label{eqn:donsker1}
    \left(\sqrt{n_j}\left(n_j^{-1} \sum_{j=1}^{n_j} 1_{[\xi_{l,j}\le t]} - (1-e^{-t}) \right);t\ge0\right)\weakarrow \left(B_j^{\operatorname{br}}(1-e^{-t});t\ge 0\right)
\end{align}
in the Skorohod space. 
 Under Assumption \ref{ass:1}, we can easily check \eqref{eqn:donsker1} implies the following lemma. 
\begin{lemma}\label{lem:Donsker}
Suppose Assumption \ref{ass:1}. Jointly for all $i,j$, 
\begin{align*}
    &\left( n^{1/2}\bigg( Z_{i,j}^{(n)}(t) - \kappa_{i,j}\mu_j (1-e^{-t}) + 1_{[i=j]}t\bigg);t\ge 0\right)
    \\ &\qquad \weakarrow \left((\kappa_{i,j}\beta_j+\lambda_{i,j}\mu_j )(1-e^{-t}) + \kappa_{i,j}\sqrt{\mu_j} B_j^\br(1-e^{-t});t\ge0 \right)
\end{align*} in the Skorohod space 
where $B_j^\br$ are independent standard Brownian bridges.   
\end{lemma}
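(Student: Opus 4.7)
The plan is to reduce everything to the Donsker statement \eqref{eqn:donsker1} by algebraic manipulation, using Assumption \ref{ass:1}(i)--(ii) to extract the deterministic drift. From \eqref{eqn:zform1} we can write
\begin{equation*}
Z_{i,j}^{(n)}(t) + 1_{[i=j]} t \;=\; \kappa_{i,j}^{(n)} \,\frac{n_j}{n}\, F_j^{(n)}(t),
\qquad F_j^{(n)}(t) := \frac{1}{n_j}\sum_{l=1}^{n_j} 1_{[\xi_{l,j}\le t]},
\end{equation*}
so after centering the quantity of interest becomes
\begin{equation*}
n^{1/2}\Bigl(\kappa_{i,j}^{(n)}\tfrac{n_j}{n} F_j^{(n)}(t) - \kappa_{i,j}\mu_j(1-e^{-t})\Bigr) \;=\; A_{i,j}^{(n)}(t) \,+\, R_{i,j}^{(n)}(t),
\end{equation*}
where I set $A_{i,j}^{(n)}(t) = n^{1/2}\bigl(\kappa_{i,j}^{(n)}\tfrac{n_j}{n} - \kappa_{i,j}\mu_j\bigr)(1-e^{-t})$ (the ``parameter bias'') and $R_{i,j}^{(n)}(t) = \kappa_{i,j}^{(n)}\tfrac{n_j}{n}\, n^{1/2}\bigl(F_j^{(n)}(t) - (1-e^{-t})\bigr)$ (the ``empirical fluctuation'').

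For the first piece I would expand using Assumption \ref{ass:1} to obtain
\begin{equation*}
\kappa_{i,j}^{(n)}\frac{n_j}{n} \;=\; \kappa_{i,j}\mu_j \;+\; n^{-1/2}\bigl(\kappa_{i,j}\beta_j+\lambda_{i,j}\mu_j\bigr) \;+\; o(n^{-1/2}),
\end{equation*}
which immediately yields $A_{i,j}^{(n)}(t) \to (\kappa_{i,j}\beta_j+\lambda_{i,j}\mu_j)(1-e^{-t})$ uniformly on compact sets (it is a deterministic, continuous, non-random limit). For the second piece, factor $n^{1/2}=\sqrt{n/n_j}\cdot \sqrt{n_j}$ and use $n_j/n\to\mu_j$ together with $\kappa_{i,j}^{(n)}\to\kappa_{i,j}$, so that \eqref{eqn:donsker1} and the continuous mapping theorem (multiplication by convergent scalars) give
\begin{equation*}
R_{i,j}^{(n)} \;\weakarrow\; \kappa_{i,j}\sqrt{\mu_j}\,B_j^{\br}(1-e^{-\,\cdot\,})
\end{equation*}
in the Skorohod space.

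Joint convergence across $(i,j)$ is where I would be slightly careful but not obstructed. For distinct values of $j$ the families $\{\xi_{l,j}\}_{l}$ are independent, so the empirical processes $F_j^{(n)}$ converge jointly (in the product Skorohod space) to \emph{independent} Brownian bridges $B_j^{\br}$; for fixed $j$ but varying $i$ the randomness is simply a deterministic $\kappa_{i,j}\sqrt{\mu_j}$ multiple of the same bridge, which is inherited from the one-dimensional statement. Finally, since the drift limit (from $A_{i,j}^{(n)}$) is continuous and deterministic, Slutsky together with the fact that addition in the Skorohod topology is continuous at pairs with a continuous summand lets us combine the two pieces.

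The only ``obstacle'' is bookkeeping: keeping track of the $o(n^{-1/2})$ remainders and confirming the joint Skorohod convergence rather than a single marginal. Both reduce to standard facts (the limits are continuous, and the prefactors $\kappa_{i,j}^{(n)} n_j/n$ are deterministic and convergent), so I expect no genuine analytic difficulty beyond applying \eqref{eqn:donsker1} componentwise and invoking continuous mapping.
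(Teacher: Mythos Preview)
Your proposal is correct and is exactly the computation the paper has in mind: the paper does not give a detailed proof but simply says that under Assumption \ref{ass:1} one ``can easily check'' that \eqref{eqn:donsker1} implies the lemma, and your decomposition into a deterministic parameter-bias term $A_{i,j}^{(n)}$ and an empirical-fluctuation term $R_{i,j}^{(n)}$ handled via \eqref{eqn:donsker1} and Slutsky is precisely that easy check spelled out in full.
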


As Proposition \ref{prop:BJR}  has a deterministic weak limit, we see
\begin{corollary}\label{cor:jointConv}
    The convergence in Proposition \ref{prop:BJR} and Lemma \ref{lem:Donsker} hold jointly.
\end{corollary}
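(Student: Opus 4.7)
The plan is to exploit the fact that the limit $M\brho$ in Proposition \ref{prop:BJR} is deterministic. Weak convergence to a constant is equivalent to convergence in probability, so $n^{-1}\bcC_n(1) \to M\brho$ in probability; similarly, $n^{-1}\max_{l\ge 2}\#\cC_n(l) = O_{\PR}(n^{-1}\log n) \to 0$ in probability. This reduces the corollary to a standard Slutsky-type statement.

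The principle I would invoke is the following: if $X_n \weakarrow X$ in a Polish space $E_1$ and $Y_n \to c$ in probability for some deterministic $c$ in a Polish space $E_2$, then $(X_n, Y_n) \weakarrow (X, c)$ in $E_1 \times E_2$. The routine justification is that, for any bounded continuous $f : E_1 \times E_2 \to \R$, tightness of $\{X_n\}$ allows one to truncate to a compact $K \subset E_1$ on which $f$ is uniformly continuous; convergence in probability of $Y_n$ to $c$ then yields $|\E[f(X_n, Y_n)] - \E[f(X_n, c)]| \to 0$, after which $\E[f(X_n, c)] \to \E[f(X, c)]$ by the weak convergence of $X_n$.

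Applying this with $X_n$ taken to be the rescaled process from Lemma \ref{lem:Donsker}, viewed in the product Skorohod space indexed by the pairs $(i,j)$, and $Y_n = n^{-1}\bcC_n(1)$ valued in $\R^d$ (supplemented, if desired, by $n^{-1}\max_{l \ge 2} \#\cC_n(l)$ valued in $\R_+$), produces the claimed joint convergence. I do not expect any substantive obstacle here: the entire content of the corollary is the observation that a weakly convergent sequence and a sequence converging in probability to a constant automatically converge jointly.
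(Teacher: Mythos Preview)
Your proposal is correct and matches the paper's approach exactly: the paper gives no proof beyond the single observation ``As Proposition \ref{prop:BJR} has a deterministic weak limit, we see'', which is precisely the Slutsky-type principle you spell out. You have simply written out in detail what the paper leaves implicit.
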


It will be convenient to reformulate Lemma \ref{lem:Donsker} in terms of a vector-valued process $\bZ^{(n)}$. For each $i\in [d]$, let 
\begin{align*}
    Z_i^{(n)} (\bt) = \sum_{j=1}^n Z_{i,j}^{(n)}(t_j)
\qquad\textup{and}\qquad
    \varphi_i(\bt) = -t_i + \sum_{j=1}^n \kappa_{i,j}\mu_j(1-e^{-t_j}).
\end{align*}
By using the Skorohod representation theorem we have
\begin{align}
  \nonumber  Z_i^{(n)}(\bt) = \varphi_i(\bt) 
  &+n^{-1/2}\left( \sum_{j=1}^d (\kappa_{i,j}\beta_j+\lambda_{i,j}\mu_j)(1-e^{-t_j})+  \sum_{j=1}^d \kappa_{i,j}\sqrt{\mu_j} B_j^\br(1-e^{-t_j}) +o(1)\right) \\
    &= \varphi_i(\bt) + n^{-1/2}\left(m_i(\bt)+ \Psi^\circ_i(\bt)\right) + \delta_{i}(\bt) \qquad\textup{(say)} \label{eqn:bZform}.
\end{align} Here $n^{1/2}\delta_{i}(\bt) \to 0$ locally uniformly in $\bt\in\R_+^d$ is just an explicit representation of the error term.
Note that $\Psi^\circ_i:\R_+^d\to \R$ are continuous Gaussian processes with mean $0$. 
We abbreviate $m_i+\Psi_i^\circ = \Psi_i$. Let us also write $\bphi = (\varphi_1,\dotms, \varphi_d)^T$ and $\bPsi(\bt)= (\Psi_1(\bt),\dotms, \Psi_d(\bt))^T$. Similarly denote $\bPsi^\circ$. While the precise covariance structure is not needed, we note that at $\bt = KM\brho$ that
\begin{equation}\label{eqn:KMPsival}
-\bPsi(KM\brho) \overset{d}{=} \left(KB+\Lambda M\right) \brho + K\bzeta
\end{equation} where $B$ and $\bzeta$ are as in Theorem \ref{thm:CLT}.

\subsection{Path properties of $\bphi$}

We first note that if we set $f(j) = 1-e^{-t_j}$ have $t_i = -\log(1-f(i))$ and so
\begin{equation*}
    \varphi_i(\bt) = -t_i +\sum_{j=1}^d \kappa_{i,j}\mu_j (1-e^{-t_j}) = T_Kf(i) - \log(1-f(i))
\end{equation*}
where $T_K$ is introduced in \eqref{eqn:TKdef}. Rearranging, we see that there is a bijection between
\begin{align*}
    \left\{\bt\in \R_+^d: \bphi(\bt) = \bzer \right\} \qquad\textup{and}\qquad  \{\rho:[d]\to\R_+^d: \Phi_K\rho = \rho\}.
\end{align*} 
By Lemma \ref{lem:5.10BJR} and Lemma \ref{lem:BJR_Lem2}, Assumption \ref{ass:1} implies there are precisely two values $\bt\in \R^d_+$ on the left-hand side above. We label these two points as $\bzer$ and $\bt^0$. Note that
\begin{equation*}
    1-e^{-t_i^0} = \rho_i.
\end{equation*}
Since we know that $1-e^{-T_K \rho} = \rho$, we get the $t_i^0 = T_K \rho(i).$ Equivalently, $\bt^0 = KM\brho$.

The next lemma gives a way to ``construct'' $\bt^0$ using the first hitting times of \cite{Chaumont:2020}.
\begin{lemma}
Let $\ba$ be the (right) Perron-Frobinous eigevnector of $KM$ normalized so that $\sum_{i} a_i \mu_i = 1$. Then under Assumption \ref{ass:1},
\begin{equation*}
    \bt^{0} = \lim_{y\downarrow0} \inf\{\bt\in \R_+^d : \bphi(\bt) = -y\ba\}.
\end{equation*}
\end{lemma}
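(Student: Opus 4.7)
The plan is to use the substitution $f(j) := 1-e^{-t_j}$ introduced just above the lemma. For $y > 0$, the equation $\bphi(\bt) = -y\ba$ becomes, after this change of variable, $\Phi_{K,y}(f) = f$ on $[0,1)^d$, where
$$\Phi_{K,y}(f)(i) := 1 - \exp\bigl(-T_K f(i) - y a_i\bigr).$$
Since $t_j = -\log(1-f(j))$ is strictly increasing in $f(j)$, the coordinate-wise infimum of solutions $\bt \in \R_+^d$ corresponds bijectively to the coordinate-wise infimum of fixed points $f \in [0,1)^d$ of $\Phi_{K,y}$. I would then reduce the lemma to showing that for each sufficiently small $y > 0$ this fixed point set is a singleton $\{f^{(y)}\}$ with $f^{(y)} \to \brho$ as $y \downarrow 0$, since the unique corresponding $\bt^{(y)}$ then satisfies $\bt^{(y)} \to \left(-\log(1-\rho_i)\right)_i = T_K\brho = KM\brho = \bt^0$.

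For existence near $\brho$ I would apply the implicit function theorem to $F(f,y) := \Phi_{K,y}(f) - f$ at $(f,y) = (\brho, 0)$. A direct computation gives $D_f F(\brho, 0) = \diag(\bone - \brho)KM - I$, which shares its nonzero spectrum with $J = KM\diag(\bone - \brho) - I$ (the standard $AB$-vs-$BA$ identity) and is therefore invertible by Lemma \ref{lem:BJR_Lem2}. Since $\diag(\bone-\brho)KM$ has Perron-Frobenius eigenvalue $\lambda_1^* < 1$, the Neumann series representation of $(I - \diag(\bone-\brho)KM)^{-1}$ has non-negative entries; a first-order expansion then gives $f^{(y)} = \brho + y\bdelta + o(y)$ with $\bdelta := (I - \diag(\bone-\brho) KM)^{-1}\diag(\bone-\brho)\ba$ coordinate-wise non-negative. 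Consequently $f^{(y)} \in [0,1)^d$ for small $y > 0$ and $f^{(y)} \to \brho$.

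To exclude any other fixed point, compactness of $[0,1]^d$ and continuity of $\Phi_{K,y}$ in $y$ reduce the problem to ruling out fixed points that accumulate, as $y \downarrow 0$, at a fixed point of $\Phi_K$ other than $\brho$; by Lemma \ref{lem:5.10BJR} the only candidate is $\bzer$. To exclude accumulation at $\bzer$ I would use a linearization: if $f^{(y)} \to \bzer$ and $f^{(y)} = \Phi_{K,y}(f^{(y)})$, then expanding the exponential gives $(I - KM) f^{(y)} = y\ba + o(|f^{(y)}| + y)$. Pairing with a strictly positive left Perron-Frobenius eigenvector $\bu$ of $KM$ (so $\bu^T KM = \lambda_1 \bu^T$) yields $(1 - \lambda_1)\bu^T f^{(y)} = y\, \bu^T \ba + o(|f^{(y)}| + y)$; since $\lambda_1 > 1$ and $\bu, f^{(y)} \ge \bzer$, the left side is $\le 0$, while the right side is strictly positive for all small $y > 0$, a contradiction.

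The main obstacle is this last uniqueness step: ruling out \emph{spurious} fixed points of $\Phi_{K,y}$ in $[0,1)^d$ that fail to accumulate at $\brho$. Compactness and the two-fixed-point dichotomy of Lemma \ref{lem:5.10BJR} confine accumulation to $\{\bzer, \brho\}$, and the sign information $\lambda_1 > 1$ (exploited via the left Perron-Frobenius eigenvector) is exactly what rules out the $\bzer$ branch. Once uniqueness is in hand, the set $\{\bt\in\R_+^d : \bphi(\bt) = -y\ba\}$ is the singleton $\{\bt^{(y)}\}$ for all small $y>0$, its coordinate-wise infimum is itself, and $\bt^{(y)} \to \bt^0$ as required.
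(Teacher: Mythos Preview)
Your argument is correct and takes a genuinely different route from the paper's. The paper works directly with the first hitting times $\bv^y = \inf\{\bt:\bphi(\bt)=-y\ba\}$: it invokes the monotonicity $y\mapsto \bv^y$ from the Chaumont--Marolleau first-passage theory to obtain the limit $\bv^0$, observes $\bphi(\bv^0)=\bzer$ by continuity, and then rules out $\bv^0=\bzer$ via the Perron--Frobenius characterisation of the right eigenvector (showing $KM\bv^y\le\lambda_1\bv^y$ would force $\bv^y\propto\ba$, a contradiction). You instead pass to the fixed-point formulation for $\Phi_{K,y}$, produce a branch $f^{(y)}\to\brho$ by the implicit function theorem, and establish \emph{uniqueness} of fixed points for small $y$ by compactness together with a left-eigenvector pairing to exclude the $\bzer$ branch. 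Your approach is more self-contained (it does not rely on \cite{Chaumont:2020}) and actually yields the stronger statement that $\{\bt:\bphi(\bt)=-y\ba\}$ is a singleton for all small $y>0$; the paper's proof is shorter but leans on the first-passage monotonicity machinery already in play.

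One small point worth tightening: in your exclusion of accumulation at $\bzer$, the assertion that the right-hand side $y\,\bu^T\ba + o(|f^{(y)}|+y)$ is strictly positive tacitly needs $|f^{(y)}|=O(y)$. This follows immediately from your own identity: since $|(1-\lambda_1)\bu^T f^{(y)}|\ge c|f^{(y)}|$ for some $c>0$ (as $\bu,f^{(y)}>0$), equating with the right side and absorbing the $o$-term gives $|f^{(y)}|\le C y$ for small $y$, after which the error is genuinely $o(y)$ and the sign contradiction is clean.
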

\begin{proof}
Define $\bv^y = \inf\{\bt\in \R_+^d: \bphi(\bt)=-y\ba\}$. By \cite{Chaumont:2020}, $\bv^y<\bv^z$ coordinate-wise for all $y<z$ and so $\bv^0:= \lim_{y\downarrow0} \bv^y$ exists. Note that since $\bphi$ is continuous, $\bphi(\bv_0) = \bzer$. It suffices to show that $\bv_0\neq \bzer$. This is relatively easy. 

To start, note that at least one coordinate of $\bv^y$ for $y>0$ is strictly positive as $\bphi(\bzer) = \bzer$ and $\bphi$ is continuous. Suppose $J\subset [d]$ are all coordinates of $\bv^y$ that are strictly positive. If $J\neq [d]$, then for any $i\notin J$, we have
    \begin{equation*}
        \varphi_i(\bv^y) =-v^y_{i} +  \sum_{k\notin J} \kappa_{i,k}\mu_k(1-e^{-v^y_{k}}) + \sum_{j\in J} \kappa_{i,j}\mu_j (1-e^{-v^y_{j}}) = \sum_{j\in J} \kappa_{i,j}\mu_j (1-e^{-v^y_{j}}) >0
    \end{equation*} contradicting $\bphi(\bv^y) = -y\bone$. Hence $\bv^y\in (0,\infty)^d$. Now suppose that $\bv^y\to \bzer$. One can easily check the Jacobian of $\bphi:\R^d\to\R^d$ at $\bzer$ is
\begin{equation*}
  J_{\bphi}(\bzer) =(
      \kappa_{i,j}\mu_j - 1_{[i=j]}
  ;i,j\in[d])= KM-I.
\end{equation*}
Hence, as $y\to 0$ we have $\bphi(\bv^y) = (KM-I)\bv^y + O(\|\bv^y\|^2)$ and this implies
\begin{equation}\label{eqn:pfeqn1}
    -y \ba = (KM-I)\bv^y + O(\|\bv^y\|^2).
\end{equation}
Rearranging, we see with the coordinate-wise comparison
\begin{equation*}
    KM \bv^y = \bv^y - y \ba + O(\|\bv^y\|^2) \le \lambda_1 \bv^y\qquad\forall y\textup{ small enough}.
\end{equation*}
Hence, by the Perron-Frobenius theorem (see, e.g. \cite[Theorem 3.35(ii)]{Woess.09}), $\bv^y = c_y \ba$ for some constant $c_y> 0$. This contradicts \eqref{eqn:pfeqn1} and so $\bv^y$ does not converge to zero.
\end{proof}

\subsection{Estimate for the long excursion interval}

By Proposition \ref{prop:BJR} there exists a single connected component $\cC_n(1)$ that is of size $\Theta(n)$ while all the remaining components are of order $O(\log n)$. As commented above, $ \bDelta_n(1) \overset{d}{=} n^{-1} K^{(n)} \bcC_n(1)$
is the largest jump of $y\mapsto \bT(y,\ba, \Z^{(n)})$. Let us write $\bL$ and $\bR$ as the ``left'' and ``right'' endpoints of this jump. That is $\bR_n = \bT(Y_0+, \ba,\Z^{(n)})$, $\bL_n = \bT(Y_0, \ba,\Z^{(n)})$ where $Y_0$ is the location of the largest jump of $\bT$. This implies
\begin{equation*}
    \bDelta_n(1) = \bR_n - \bL_n = \bT(Y_0+, \ba,\Z^{(n)}) - \bT(Y_0,\ba,\Z^{(n)})
\end{equation*}
In fact, by Corollary \ref{cor:bTbhave}, we know that $Y_0  = E_1'$ where, conditionally given $\bcC_n(l)$, $E_1'\sim \Exp(\ba^T \bcC_n(1))$.

As in \cite{CLL.24}, we need to estimate the value of $\bL_n$ and $\bZ^{(n)}(\bL_n-)$. By Corollary \ref{cor:bTbhave}, we know that, conditionally given $(\bcC_n(l);l\ge 1)$, that
\begin{equation*}
    \bL_n
    \big| (\bcC_n(l);l\ge 1)\overset{d}{=} -E_1' \ba + \sum_{l: E_l'<E_1'} n^{-1}K^{(n)} \bcC_n(l)
\end{equation*}
where $E_l'\sim \Exp(\ba^T\bcC_n(l))$ are conditionally independent. Also, coordinate-wise
\begin{equation}\label{eqn:zcomp}
   \bzer \ge \bZ(\bL_n-) \ge  -\bL_n 
\end{equation}
since $\bZ(\bL_n-) = -y\ba$ for some $y\ge 0$ (the definition of first hitting time) and $Z_{i,j}(\bt)\ge -t_i$.

We now prove the following lemma.
\begin{lemma}\label{lem:znLnandLn}
    Under Assumption \ref{ass:1}. the following convergences hold
    \begin{align*}
        &n^{1/2}\bL_n\weakarrow \bzer,& n^{1/2}\bZ^{(n)}(\bL_n-)\weakarrow\bzer,&
    &n^{1/2}\bZ^{(n)}(\bR_n)\weakarrow \bzer,&  &\bR_n \weakarrow KM\brho&.
    \end{align*}
    Moreover, they hold jointly with the convergence in Corollary \ref{cor:jointConv}.
\end{lemma}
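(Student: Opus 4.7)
The plan is to use Corollary \ref{cor:bTbhave} to represent $\bL_n$ conditionally on the components $(\bcC_n(l);l\ge 1)$, then estimate the two resulting summands separately, and finally read off $\bR_n$ and the process values at $\bL_n-$ and $\bR_n$ directly from the first-hitting-time definition. All four limits turn out to be deterministic, so joint convergence with Corollary \ref{cor:jointConv} is automatic once the corresponding convergences in probability are established.

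For $\bL_n$ itself: conditionally on the components, $E_1'\sim\Exp(\ba^T\bcC_n(1))$, and $\ba^T\bcC_n(1)= n\,\ba^T M\brho + o_{\PR}(n)$ is of order $n$ by Proposition \ref{prop:BJR}. Hence $E_1' = O_{\PR}(1/n)$, so the ``linear'' term $\ba E_1'$ in Corollary \ref{cor:bTbhave} is already $o_{\PR}(n^{-1/2})$. For the jump part $S_n := \sum_{l\ge 2:\, E_l'<E_1'} n^{-1} K\bcC_n(l)$, I would bound coordinate-wise using
\begin{equation*}
\PR\bigl(E_l' < E_1' \,\bigm|\, (\bcC_n(k))_{k\ge 1}\bigr) = \frac{\ba^T\bcC_n(l)}{\ba^T\bcC_n(1)+\ba^T\bcC_n(l)} \le \frac{\ba^T\bcC_n(l)}{\ba^T\bcC_n(1)}.
\end{equation*}
Since $\sum_{l\ge 2}\ba^T\bcC_n(l)\le \|\ba\|_\infty\, n$ while $\ba^T\bcC_n(1)$ is of order $n$, the conditional expected number of $l\ge 2$ with $E_l'<E_1'$ is $O_{\PR}(1)$. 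Combined with $\max_{l\ge 2}\#\cC_n(l) = O_{\PR}(\log n)$ from Proposition \ref{prop:BJR}, this yields $S_n = O_{\PR}(n^{-1}\log n)=o_{\PR}(n^{-1/2})$, and hence $n^{1/2}\bL_n\weakarrow\bzer$.

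The process values at the two endpoints come from the definition of the hitting time. By construction $\bZ^{(n)}(\bL_n-) = -\ba E_1'$, so $n^{1/2}\bZ^{(n)}(\bL_n-)=O_{\PR}(n^{-1/2})\to 0$. To see $\bZ^{(n)}(\bR_n) = -\ba E_1'$, I note that for $y>E_1'$ sufficiently close to $E_1'$ no additional $E_l'$ is hit, so $\bT(y,\ba,\Z^{(n)}) = \bR_n + \ba(y-E_1')$; the defining identity $\bZ^{(n)}(\bT(y,\ba,\Z^{(n)})-) = -\ba y$ combined with the coordinate-wise right-continuity of $\bZ^{(n)}$ at the (almost surely) generic point $\bR_n$ then forces $\bZ^{(n)}(\bR_n) = -\ba E_1'$, which again is $O_{\PR}(n^{-1})$ in norm. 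Finally, $\bR_n = \bL_n + \bDelta_n(1) = \bL_n + n^{-1}K\bcC_n(1)$, and combining $\bL_n\to\bzer$ with $n^{-1}\bcC_n(1)\weakarrow M\brho$ from Proposition \ref{prop:BJR} yields $\bR_n\weakarrow KM\brho$.

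The main obstacle I expect is the clean justification that $\bZ^{(n)}(\bR_n) = -\ba E_1'$. The heuristic is transparent (the process has ``returned'' to level $-\ba E_1'$ at the far endpoint of the long excursion interval), but the rigorous statement depends on the multi-dimensional c\`{a}dl\`{a}g conventions for $\bZ^{(n)}$ and on the almost-sure fact that $\bR_n$ avoids any coordinate jump of $\bZ^{(n)}$. The other three limits are routine consequences of Proposition \ref{prop:BJR} once the explicit hitting-time representation from Corollary \ref{cor:bTbhave} is in hand.
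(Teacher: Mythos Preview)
Your proof is correct and follows essentially the same route as the paper's: both bound the conditional probability $\PR(E_l'<E_1')$ by the ratio of exponential rates, use Markov's inequality to control the number of small components explored before the giant, and then combine with the $O_\PR(\log n)$ bound on small-component sizes from Proposition~\ref{prop:BJR}; the paper merely packages this estimate as a standalone auxiliary lemma (Lemma~\ref{lem:help11}) and uses the sandwich \eqref{eqn:zcomp} in place of your exact identity $\bZ^{(n)}(\bL_n-)=-\ba E_1'$. Your identification of the only genuinely delicate point---the value of $\bZ^{(n)}(\bR_n)$---and the right-continuity argument you sketch for it are sound and in fact more explicit than the paper's terse appeal to ``the definition of first hitting times.''
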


The proof is actually a consequence of Proposition \ref{prop:BJR}, Corollary \ref{cor:bTbhave} and the following elementary lemma.
\begin{lemma}\label{lem:help11}
    Let $X^{(n)}_0\ge X^{(n)}_1\ge \dotms\ge X_n^{(n)}\ge 0$ be a sequence of random variables and $\bV_0^{(n)},\dotsm \bV_n^{(n)}$ be a sequence of random vectors in $\R^d$ for some $d$. Suppose that
    \begin{enumerate}
        \item There is some $\eps>0$ and constant $C$  such that 
        \begin{equation*}
            \PR\left(X_0^{(n)}> \eps n, \sum_{j=1}^n X_j^{(n)} \le C n\right) \to 1
        \end{equation*}
        \item For some $\omega(n)\to\infty$ with $\omega(n) = o(n^{1/2})$ it holds $\PR(X_1^{(n)} \le \omega(n)) \to 1.$
        \item There is a constant $\alpha\ge 0$ such that $\|\bV_j^{(n)}\|\le \alpha X_j^{(n)}$ for all $j$.
    \end{enumerate}
    Conditionally given $(X_0^{(n)},\dotsm, X_n^{(n)})$ let $\xi_l^{(n)}\sim \Exp(X_l^{(n)})$ be conditionally independent exponential random variables. Then, jointly, 
     \begin{equation*}
      \bX_n:=  \sqrt{n}\sum_{l: \xi_l^{(n)}< \xi_0^{(n)}} n^{-1}\bV^{(n)}_l \weakarrow \bzer\qquad\textup{and}\qquad \sqrt{n}\xi_0^{(n)}\weakarrow 0.
    \end{equation*}
   
\end{lemma}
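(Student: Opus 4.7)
The plan is to argue by a two-step second-moment method, conditioning first on $\mathcal{F} = \sigma(X_0^{(n)},\dotsc,X_n^{(n)},\xi_0^{(n)})$ and then integrating out. Throughout the argument I restrict to the event $\mathcal{E}_n = \{X_0^{(n)} > \eps n,\ \sum_j X_j^{(n)} \le Cn,\ X_1^{(n)} \le \omega(n)\}$, which has probability tending to $1$ by hypotheses (1) and (2); outside $\mathcal{E}_n$ the contribution is negligible.

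First I would dispose of the scalar statement. On $\mathcal{E}_n$ the random variable $\xi_0^{(n)}$ is stochastically dominated by an $\Exp(\eps n)$ random variable, so $\sqrt{n}\,\xi_0^{(n)}$ is dominated by $\Exp(\eps)/\sqrt{n}$, which converges to $0$ in probability. This not only gives the second claim but provides the key bound $\sqrt{n}\,\xi_0^{(n)} = o_{\PR}(1)$ used below.

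Next, conditionally on $\mathcal{F}$, the indicators $\mathbf{1}_{[\xi_l^{(n)}<\xi_0^{(n)}]}$ for $l\ge 1$ are independent Bernoullis with parameter $1-\exp(-X_l^{(n)}\xi_0^{(n)})\le X_l^{(n)}\xi_0^{(n)}$. Using $\|\bV_l^{(n)}\|\le \alpha X_l^{(n)}$, the conditional mean satisfies, on $\mathcal{E}_n$,
\begin{equation*}
    \bigl\|\E[\bX_n\mid \mathcal{F}]\bigr\| \le n^{-1/2} \alpha \sum_{l\ge 1} X_l^{(n)}\cdot X_l^{(n)}\xi_0^{(n)} \le n^{-1/2}\alpha\,\xi_0^{(n)}\,X_1^{(n)}\!\sum_{l} X_l^{(n)} \le \alpha C \,\sqrt{n}\,\xi_0^{(n)}\,\omega(n).
\end{equation*}
Since $\sqrt{n}\,\xi_0^{(n)} = O_{\PR}(n^{-1/2})$ and $\omega(n) = o(n^{1/2})$, the product is $o_{\PR}(1)$. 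Coordinate-wise, the conditional variance satisfies
\begin{equation*}
    \Var\bigl(\bX_n^{(k)} \mid \mathcal{F}\bigr) \le n^{-1}\alpha^2 \sum_{l\ge 1}(X_l^{(n)})^2\cdot X_l^{(n)}\xi_0^{(n)} \le n^{-1}\alpha^2 \xi_0^{(n)} (X_1^{(n)})^2 \sum_l X_l^{(n)} \le \alpha^2 C\,\xi_0^{(n)}\,\omega(n)^2,
\end{equation*}
which is again $o_{\PR}(1)$ because $\xi_0^{(n)} = O_{\PR}(1/n)$ and $\omega(n)^2=o(n)$. Chebyshev's inequality then gives $\bX_n - \E[\bX_n\mid \mathcal{F}] \to \bzer$ in probability, and combined with the mean estimate yields $\bX_n\weakarrow \bzer$.

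Finally, the joint convergence in the lemma is automatic because both limits are deterministic constants, so marginal convergence in probability implies joint convergence in distribution. I do not anticipate a genuine obstacle here; the only place requiring care is bookkeeping of the good event $\mathcal{E}_n$ so that the deterministic bounds $X_1^{(n)}\le \omega(n)$ and $\sum_l X_l^{(n)}\le Cn$ can be applied pathwise, and the slight subtlety that $\xi_0^{(n)}$ is included in the conditioning $\sigma$-algebra $\mathcal{F}$ so that the Bernoulli parameters are $\mathcal{F}$-measurable.
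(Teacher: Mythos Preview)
Your argument is correct, but it takes a different route from the paper's. You condition additionally on $\xi_0^{(n)}$, treat the indicators $1_{[\xi_l^{(n)}<\xi_0^{(n)}]}$ as independent Bernoullis with parameter $1-e^{-X_l^{(n)}\xi_0^{(n)}}\le X_l^{(n)}\xi_0^{(n)}$, and run a second-moment (mean plus variance) argument directly on $\bX_n$. The paper instead bounds $\|\bX_n\|\le n^{-1/2}\omega(n)\,S_n$ where $S_n=\#\{l\ge 1:\xi_l^{(n)}<\xi_0^{(n)}\}$, and then controls $S_n$ by a \emph{first}-moment (Markov) bound using the competing-exponentials identity $\PR(\xi_l^{(n)}<\xi_0^{(n)}\mid X^{(n)}_\cdot)\le X_l^{(n)}/X_0^{(n)}$, which on $\mathcal{E}_n$ gives $\E[S_n\mid X^{(n)}_\cdot]\le C/\eps$. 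The paper's route is shorter and avoids any variance computation or conditioning on $\xi_0^{(n)}$; your route is a touch longer but has the virtue of making the conditional independence structure completely explicit, and the moment bounds you obtain are sharper in their dependence on the $X_l^{(n)}$ (you exploit $\sum_l X_l^2$ and $\sum_l X_l^3$ rather than just the count). Either approach is adequate here since the conclusion is only convergence to zero.
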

\begin{proof}
    The second is an obvious consequence of assumption (1), so we only prove the statement about $\bV^{(n)}_l$. Without loss of generality we assume that $\alpha = 1$.

     Let $S_n = \#\{l:\xi_l^{(n)}<\xi_0^{(n)}\}$. Note that by (2) and (3), we know $\|\bX_n\|\le n^{-1/2}\omega(n) S_n$. Hence it suffices to show that $\PR\left(S_n> n^{1/2}/ \omega(n)\right) \to 0.$ By elementary properties of exponentials, $\PR(\xi_l^{(n)}< \xi_0^{(n)}|(X_j^{(n)};j)) \le X_l^{(n)}/X_0^{(n)}$ for all $l\ge 1$. Therefore, Markov's inequality implies
   \begin{align*}
       \PR\left(S_n > z| (X_j^{(n)};j)\right) \le \frac{1}{z} \sum_{l=1}^n \frac{X_l^{(n)}}{X_0^{(n)}}.
   \end{align*}
   Let $\mathcal{E}_n$ be the event described in hypothesis (1). Then $\sum_{l=1}^n \frac{X_l^{(n)}}{X_0^{(n)}}1_{[\mathcal{E}_n]} \le \frac{C}{\eps}$ and so
   \begin{align*}
       \PR&\left(S_n > n^{1/2}/\omega(n)\right) \le \PR(\mathcal{E}_n^c) + \PR\left(S_n > n^{1/2}/\omega(n) | \mathcal{E}_n\right)\le o(1)+ \frac{C}{\eps}\frac{\omega(n)}{n^{1/2}} = o(1).
   \end{align*} 
\end{proof}

\begin{proof}[Proof of Lemma \ref{lem:znLnandLn}]
Set $X_l^{(n)} = \ba^T \cC_n(l+1)$ and extending the list by zeros to be length $n$. Hypothesis (1) and (2) of Lemma \ref{lem:help11} are a reformulation of Proposition \ref{prop:BJR} with $\omega(n) = \log(n)^2$, say and $\xi_l^{(n)}$ are the random variables in Corollary \ref{cor:bTbhave}. Lastly, if we set $\bV^{(n)}_l = K^{(n)} \bcC_n(l+1)$, since $\ba\in(0,\infty)^d$ we clearly have $\|\bV^{(n)}_l\|\le  \alpha \ba^T \cC_n(l+1)$ for some $\alpha$ large enough. 
Lastly, 
\begin{equation*}
    \bL_n = -E_1' \ba^T + \sum_{l: E_l'<E_1'} n^{-1} K^{(n)}\bcC_n(l) = o_\PR(n^{-1/2}) 
\end{equation*}
by Lemma \ref{lem:help11}. The convergence of $\bZ^{(n)}$ follows from the inequalities \eqref{eqn:zcomp}.

The results for $\bR_n$ follow immediately from the definition of first hitting times, Proposition \ref{prop:BJR}, and the convergences of $\bL_n$ and the fact that all limits are deterministic.
\end{proof}

\subsection{Proof of the CLT}

Since $\bDelta_n(1) = \bR_n-\bL_n = n^{-1}K^{(n)}\bcC_n(1))$, we have 
\begin{align*}
    n^{1/2}&\left(n^{-1}K^{(n)} \bcC_n(1) - KM\brho \right) = n^{1/2}\left(\bR_n - \bL_n -K M\brho\right)= n^{1/2}( \bR_n - \bt^0) - n^{1/2}\bL_n
\end{align*} where the last equality uses $\bt^0 = KM\brho$.

Using Skorohod representation and the expansion $\bZ = \bphi + n^{-1/2}\bPsi + \bdelta$ we have
\begin{align*}
    \sqrt{n} \bphi(\bR_n) &= \sqrt{n}\bphi(\bR_n)  - \sqrt{n} \bphi(\bt^0)\\
    &= \sqrt{n} \bZ^{(n)}(\bR_n) - \bPsi(\bR_n) - \bdelta^{(n)}(\bR_n)- \sqrt{n}\bZ^{(n)}(\bt_0) + \bPsi(\bt^0) + \bdelta^{(n)}(\bt^0)\\
    &= - \sqrt{n} \bZ^{(n)}(\bt^0) + \sqrt{n}\bZ^{(n)}(\bR_n) + \bPsi(\bt^0)-\bPsi(\bR_n) + n^{1/2}\bdelta^{(n)}(\bt^0) - n^{1/2}\bdelta^{(n)}(\bR_n).
\end{align*}
Note $n^{1/2}\bdelta^{(n)}\to \bzer$ locally uniformly, $\bR_n\to\bt^0$, $\bPsi$ is continuous and $\sqrt{n}\bZ^{(n)}(\bR_n) \to \bzer$ by Lemma \ref{lem:znLnandLn} (and our application of Skorohod's representation theorem). Hence,
\begin{equation*}
    \sqrt{n} \bphi(\bR_n) = -\sqrt{n} \bZ^{(n)}(\bt^0) + o(1).
\end{equation*}
By Lemma \ref{lem:Donsker}, and the fact that $\bphi(\bt^0) = \bzer$, we have shown (using our application of Skorohod)
\begin{equation}\label{eqn:phiconvatRn}
    \sqrt{n} \bphi(\bR_n) \longrightarrow -  \bPsi(\bt^0).
\end{equation}

The remainder of the proof is essentially the Delta method of asymptotic statistics \cite[Chapter 3]{vanderVaart.00} and the inverse function theorem. Let us compute the Jacobian of $\bphi$:
\begin{align*}
    J_{\bphi}(\bt) =(
        \frac{\partial}{\partial t_j} \varphi_i(\bt)
    ;i,j\in[d])= (-1_{[i=j]}+\kappa_{i,j} \mu_j e^{-t_j};i,j).
\end{align*} 
Evaluating at $\bt^0$, we have
\begin{align*}
    J_{\bphi}&(\bt^0) = (-1_{[i=j]} + \kappa_{i,j} \mu_j e^{-t_j^0};i,j) = -I + K M\operatorname{diag}(e^{-t^0_j};j\in[d]) \\
    &= KM-I - KM\operatorname{diag}(\brho) = J
\end{align*}
where we used $1-e^{-t_j^0} = \rho(j)$. Note that this Jacobian is invertible by Lemma \ref{lem:BJR_Lem2}.

We now state the following lemma, whose proof is an easy application of the inverse function theorem.
\begin{lemma} 
    Suppose that $\bfl:\R^d\to \R^d$ be a smooth map and suppose that $A = J_{\bfl}(\bx)\in {GL}_d(\R)$ is invertible for some fixed $\bx$. If $\by_n\to \bx$ and $\sqrt{n} (\bfl(\by_n)-\bfl(\bx))\to \bz$ then
    \begin{equation*}
        \sqrt{n}(\by_n-\bx) \to A^{-1} \bz.
    \end{equation*}
\end{lemma}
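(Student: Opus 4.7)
The plan is to apply the inverse function theorem to obtain a local inverse of $\bfl$ near $\bx$ and then Taylor expand this inverse at $\bfl(\bx)$. Since $A = J_{\bfl}(\bx)$ is invertible and $\bfl$ is smooth (so in particular $C^1$), the inverse function theorem guarantees the existence of open neighborhoods $U\ni\bx$ and $V\ni\bfl(\bx)$ and a smooth map $\bfl^{-1}\colon V\to U$ which is a two-sided inverse of $\bfl|_U$ and whose Jacobian at $\bfl(\bx)$ is $A^{-1}$.

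Next I would use the hypothesis $\by_n\to\bx$ to conclude that eventually $\by_n\in U$ (so that $\bfl^{-1}(\bfl(\by_n)) = \by_n$), and write the Taylor expansion
\begin{equation*}
    \by_n - \bx \;=\; \bfl^{-1}(\bfl(\by_n)) - \bfl^{-1}(\bfl(\bx)) \;=\; A^{-1}\bigl(\bfl(\by_n)-\bfl(\bx)\bigr) + r_n,
\end{equation*}
where the remainder $r_n$ satisfies $\|r_n\| = o(\|\bfl(\by_n)-\bfl(\bx)\|)$ by differentiability of $\bfl^{-1}$ at $\bfl(\bx)$. Multiplying through by $\sqrt{n}$ gives
\begin{equation*}
    \sqrt{n}(\by_n-\bx) \;=\; A^{-1}\bigl(\sqrt{n}(\bfl(\by_n)-\bfl(\bx))\bigr) + \sqrt{n}\, r_n.
\end{equation*}
By hypothesis the first term on the right converges to $A^{-1}\bz$, so it only remains to show $\sqrt{n}\, r_n\to \bzer$.

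For the error term, the hypothesis $\sqrt{n}(\bfl(\by_n)-\bfl(\bx))\to \bz$ implies $\|\bfl(\by_n)-\bfl(\bx)\| = O(n^{-1/2})$, hence $\sqrt{n}\,\|r_n\| = \sqrt{n}\cdot o(n^{-1/2}) = o(1)$. Combining, $\sqrt{n}(\by_n-\bx)\to A^{-1}\bz$, as claimed. I do not anticipate a serious obstacle here; the only subtlety is ensuring that the Taylor remainder is controlled in the correct $o(\cdot)$ sense, which follows directly from differentiability of the inverse and does not require the full smoothness hypothesis. (In fact $\bfl\in C^1$ with invertible Jacobian at $\bx$ suffices, which matches the application to $\bfl = \bphi$ in the proof of Theorem \ref{thm:CLT}.)
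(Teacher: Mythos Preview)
Your proof is correct and is precisely the argument the paper has in mind: the paper does not actually write out a proof but simply states that the lemma ``is an easy application of the inverse function theorem,'' and your inverse-function-theorem-plus-Taylor-expansion argument is exactly that application.
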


Combining the above lemma with \eqref{eqn:phiconvatRn}, we have shown (under our a.s. coupling that)
    $n^{1/2}(\bR_n - \bt^0) \to - J^{-1} \bPsi(\bt^0).$ Since $\bL_n = o(n^{-1/2})$, we have shown
\begin{align*}
    n^{1/2}\left(n^{-1}K^{(n)} \bcC_n(1)-KM\brho\right) \to - J^{-1} \Psi(\bt^0).
\end{align*}
But the left-hand side is
\begin{align*}
    n^{1/2}\left(n^{-1} K\bcC_n(1) - KM\brho\right) + \Lambda M\brho + o(1)
\end{align*} where we use $n^{-1}\bcC_n(1) = M\brho + o(1)$ a.s. under our application of Skorohod's representation theorem. 
The result now easily follows from \eqref{eqn:KMPsival}.

\end{document}